\newtheorem{theorem}{Theorem}[section]
\newtheorem{proposition}[theorem]{Proposition}
\newtheorem{lemma}[theorem]{Lemma}
\newtheorem{corollary}[theorem]{Corollary}
\theoremstyle{definition}
\newtheorem{definition}[theorem]{Definition}
\newtheorem{example}[theorem]{Example}
\newtheorem{remark}[theorem]{Remark}
\theoremstyle{theorem}%
\newtheorem*{rep@theorem}{\rep@title}
\newcommand{\newreptheorem}[2]{%
\newenvironment{rep#1}[1]{%
 \def\rep@title{#2 \ref{##1}}%
 \begin{rep@theorem}}%
 {\end{rep@theorem}}}
\numberwithin{equation}{section} 
\numberwithin{figure}{section}
\numberwithin{table}{section}
\newcommand{\Z}{\mathbb{Z}}
\newcommand{\Q}{\mathbb{Q}}
\newcommand{\R}{\mathbb{R}}
\newcommand{\C}{\mathrm{C}}
\renewcommand{\SS}{\mathfrak{S}}
\newcommand{\A}{\mathfrak{A}}
\newcommand{\Aut}{\operatorname{Aut}}
\newcommand{\Out}{\operatorname{Out}}
\newcommand{\Inn}{\operatorname{Inn}}
\newcommand{\Ad}{\operatorname{Ad}}
\newcommand{\id}{\operatorname{id}}
\newcommand{\ab}{\mathrm{ab}}
\newcommand{\Ker}{\operatorname{Ker}}
\newcommand{\ang}[1]{\left\langle{#1}\right\rangle}
\begin{document}

\title[Knot groups in lens spaces and those in $S^3$]
{An explicit relation between knot groups in lens spaces and those in $S^3$}
\author[Y. Nozaki]{Yuta Nozaki}
\subjclass[2010]{Primary 57M25, Secondary 57M10}
\keywords{Freely periodic knot; knot group; commutator and $p$th power; derived $p$-series.}
\address{Graduate School of Mathematical Sciences, the University of Tokyo \\
3-8-1 Komaba, Meguro-ku, Tokyo, 153-8914 \\
Japan}
\email{nozaki@ms.u-tokyo.ac.jp}

\maketitle

\begin{abstract}
 For a cyclic covering map $(\Sigma,K) \to (\Sigma',K')$ between two pairs of a 3-manifold and a knot each, we describe the fundamental group $\pi_1(\Sigma \setminus K)$ in terms of $\pi_1(\Sigma' \setminus K')$.
 As a consequence, we give an alternative proof for the fact that certain knots in $S^3$ cannot be represented as the preimage of any knot in a lens space, which is related to free periods of knots.
 In our proofs, the subgroup of a group $G$ generated by the commutators and the $p$th power of each element of $G$ plays a key role.
\end{abstract}

\setcounter{tocdepth}{1} 
\tableofcontents

%%%%%%%%%%
\section{Introduction}\label{sec:Intro}
Let $K$ be a torus knot and $p \geq 2$ an integer.
Hartley~\cite{Har81} introduced an argument that helps decide whether $K$ can be represented as the preimage of a knot in the lens space $L(p,q)$ for some $q$.
For example, the trefoil $T_{3,2}$ does not appear as the preimage of any knot in $L(2,1) \cong \mathbb{R}P^3$.
The Alexander polynomial of torus knots was used in his proof.

Let $K$ be a knot such that the outer automorphism group $\Out(\pi_1(S^3 \setminus K))$ is trivial.
For instance, $9_{32}$, $9_{33}$ and 24 more prime knots with 10 crossings (and their mirror images) satisfy this condition (see Kawauchi~\cite[Appendix~F.2]{Kaw96}, Kodama-Sakuma~\cite[Table~3.1]{KoSa92}).
Then it follows from Borel's theorem (see Conner-Raymond~\cite[Theorem~3.2]{CoRa72}) that $K$ cannot be represented as the preimage of any knot in any lens space (see \cite[Theorems~10.6.2 and 10.6.6(1)]{Kaw96}).

The purpose of this paper is to deduce the above facts from a single result.
Let $G$ be a group and $p$ a positive integer which is not necessarily prime.
We introduce the subgroup $\C^p(G)$ generated by the commutators and the $p$th power of each element of $G$.
The case $p=2$ was studied, for example, in Sun~\cite{Sun79} and Haugh-MacHale~\cite{HaMa97}, from a purely algebraic point of view.
Moreover, the case where $p$ is prime can be found in Stallings~\cite{Sta65} and Cochran-Harvey~\cite{CoHa08b} (see Remark~\ref{rem:ConferCp} and Appendix~\ref{sec:RepeatCp}).
We apply $\C^p$ to the fundamental group of the complement of a knot in a 3-manifold.
Let $\pi\colon \Sigma \to \Sigma'$ be a $p$-fold cyclic cover, where $\Sigma$ is an integral homology 3-sphere.
The main result of this paper is the following.

\begin{theorem}\label{thm:CovSpCp}
 Let $K'$ be a knot in $\Sigma'$ with preimage a knot $K$.
 Then the image of $\pi_\ast\colon \pi_1(\Sigma\setminus K) \rightarrowtail \pi_1(\Sigma'\setminus K')$ coincides with $\C^p(\pi_1(\Sigma'\setminus K'))$.
\end{theorem}

Considering the case of $\Sigma=S^3$ and $\Sigma'=L(p,q)$, we obtain the following corollaries.

\begin{corollary}\label{cor:Torus}
 Let $m,n,p \in \Z_{\geq2}$ with $\gcd(m,n)=1$.
 Then there exists an integer $q$ and a knot $K'$ in $L(p,q)$ such that $\pi^{-1}(K')$ is isotopic to the torus knot $T_{m,n}$ or its mirror image if and only if $\gcd(mn,p)=1$.
\end{corollary}

Furthermore, Chbili~\cite{Chb03} determined the possible values of the above $q$.
We also observe it in Proposition~\ref{prop:Chb03} via Corollary~1.

\begin{corollary}\label{cor:Out}
 A knot $K$ in $S^3$ with $\Out(\pi_1(S^3 \setminus K))=1$ cannot be represented as the preimage of any knot in any lens space.
\end{corollary}

Another proof for a special case of Corollary~2 is given in Appendix~\ref{subsec:BraidGroup}.
When a knot $K$ is represented as a preimage, it is still unknown whether $K'$ is uniquely determined.
Sakuma~\cite{Sak86}, Boileau and Flapan~\cite{BoFl87} independently proved that if the preimages of oriented knots $K_0'$, $K_1'$ are the same prime knot, then there exists a diffeomorphism $(L(p,q),K_0') \to (L(p,q),K_1')$ preserving the orientations both of $L(p,q)$ and $K_i$.
Note that $K_0'$, $K_1'$ are not necessarily ambient isotopic to each other.
They also showed that the same is true for a composite knot under a condition regarding ``slopes''.
Furthermore, Manfredi~\cite{Man14} recently constructed two knots in a lens space such that they are not ambient isotopic to each other but their preimages are the unknot.

Our work is closely related to periodic diffeomorphisms of $(S^3,K)$ without fixed points.
Indeed, such a diffeomorphism of period $p$ induces a cyclic cover $(S^3,K) \to (L(p,q),K')$ for some $q$, and vice versa (see \cite[Table~1.2]{AFW15} for instance).
Hartley~\cite{Har81} gave a list of possible free periods of prime knots up to 10 crossings.

\subsection*{Acknowledgments}
The author would like to express his gratitude to Takuya Sakasai for his various suggestions.
Also, he would like to thank Makoto Sakuma for pointing out some mistakes in a first draft of the paper and for answering several questions about the symmetry of knots.
Finally, the author wishes to express his thanks to the referees for their careful reading of the manuscript and for their useful comments.
This work was supported by the Program for Leading Graduate Schools, MEXT, Japan.

%%%%%%%%%%%%
\section{Lemmas in group theory and knot theory}\label{sec:Lem}
First of all, we set up notation and prepare basic lemmas that will be used in Section~\ref{sec:ThmCor}.

\subsection{Notation}
For a group $G$, let $\Aut(G)$ denote the automorphism group of $G$ and $\Inn(G)$ the subgroup generated by $\Ad_g$'s, where $\Ad_g$ is defined by $\Ad_g(h) = ghg^{-1}$.
The outer automorphism group $\Out(G)$ is defined by $\Out(G) = \Aut(G)/\Inn(G)$.
Let $X$ be a topological space and $G$ as above.
We set $H_\ast(X):=H_\ast(X;\Z)$ and $H_\ast(G):=H_\ast(G;\Z)$ with the trivial action of $G$ on $\Z$.
For a knot $K$ in $S^3$, let $G(K)$ denote the fundamental group of the complement of $K$, that is, $G(K)=\pi_1(S^3\setminus K)$.
Notice that, we are simplifying the notation by omitting the base point.
Let $\Z_n$ denote the cyclic group of order $n$.

\subsection{Definition of $\C^p$ and related lemmas}
We first introduce an important subgroup and give some examples.

\begin{definition}
 For a group $G$ and a positive integer $p$, let $\C^p(G)$ (or $\C^pG$ for short) denote the subgroup of $G$ generated by the set $\{g^p\mid g\in G\}\cup\{[g,h]\mid g,h \in G\}$, where $[g,h]:=ghg^{-1}h^{-1}$.
 A group $G$ is called a \emph{$\C^p$-group} if there is a group $G'$ such that $G \cong \C^p(G')$.
\end{definition}

Note that $\C^1(G)=G$ and $\C^2(G)=\ang{\{g^2\mid g\in G\}}$.

\begin{remark}\label{rem:ConferCp}
 The subgroup $\C^2(G)$ is denoted by $G^2$ in \cite{Sun79} and by $S(G)$ in \cite{HaMa97}.
 For a prime $p$, $\C^p(G)$ coincides with the first term of the $p$-lower central series in \cite{Sta65} and with the first term of the derived $p$-series in \cite{CoHa08b}.
\end{remark}

\begin{example}\label{ex:CpSymGrp}
 Let us determine the subgroup $\C^p\SS_n$ of the $n$th symmetric group $\SS_n$.
 For $p$ odd, $\C^p\SS_n$ contains all transpositions, and thus $\C^p\SS_n = \SS_n$.
 In the case where $p$ is even, we conclude $\C^p\SS_n = \A_n$ since $\A_n=[\SS_n,\SS_n] \leq \C^p\SS_n \leq \A_n$, where $\A_n$ denotes the $n$th alternating group.
\end{example}

We next state two basic lemmas.

\begin{lemma}\label{lem:ElemLem}
 For any group $G$, the subgroup $\C^p(G)$ is characteristic, and the quotient group $G/\C^p(G)$ is abelian.
 Moreover, if $G$ is finitely generated, then $G/\C^p(G)$ is isomorphic to a finite direct sum of finite cyclic groups whose orders are divisors of $p$.
\end{lemma}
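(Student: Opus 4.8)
This lemma splits into three assertions about $\C^p(G)$: (1) it is characteristic, (2) the quotient $G/\C^p(G)$ is abelian, and (3) when $G$ is finitely generated the quotient is a finite direct sum of cyclic groups of orders dividing $p$. The strategy is to verify each in turn using only the defining generating set $\{g^p \mid g \in G\} \cup \{[g,h] \mid g,h \in G\}$.

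\emph{Characteristic.} For any $\varphi \in \Aut(G)$, one has $\varphi(g^p) = \varphi(g)^p$ and $\varphi([g,h]) = [\varphi(g),\varphi(h)]$, so $\varphi$ sends the generating set of $\C^p(G)$ into itself; since $\varphi$ is surjective it sends the generating set onto itself, hence $\varphi(\C^p(G)) = \C^p(G)$.

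\emph{Abelian quotient.} Since $[G,G] \leq \C^p(G)$, the quotient $Q := G/\C^p(G)$ is abelian. Moreover every element of $Q$ has the form $g\C^p(G)$, and $g^p \in \C^p(G)$, so $Q$ has exponent dividing $p$; I would record this for use in the last part.

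\emph{Structure when $G$ is finitely generated.} If $G$ is generated by $n$ elements, then $Q$ is an abelian group generated by $n$ elements, hence finitely generated; by the structure theorem for finitely generated abelian groups, $Q \cong \Z^r \oplus (\text{finite part})$. But $Q$ has exponent dividing $p$ (a positive integer), which forces $r = 0$ and forces every invariant factor to divide $p$; thus $Q$ is a finite direct sum of cyclic groups whose orders divide $p$.

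\emph{Main obstacle.} There is essentially no obstacle: each step is a short formal verification. The only point requiring a moment's care is noting that an automorphism, being surjective, maps the generating set \emph{onto} (not merely into) itself, so that one genuinely gets equality $\varphi(\C^p(G)) = \C^p(G)$ rather than just an inclusion; and, for the last part, explicitly invoking that $Q$ has finite exponent $p \geq 1$ to kill the free rank. Given the triviality, the author likely omits or compresses this proof, which is consistent with the lemma being stated "without proof" in the excerpt.
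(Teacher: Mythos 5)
Your proof is correct, and since the paper explicitly states this lemma without proof, your argument simply supplies the standard verification the author omitted: automorphisms permute the generating set, $[G,G]\leq \C^p(G)$ gives commutativity, and the structure theorem for finitely generated abelian groups of exponent dividing $p$ gives the last claim. No issues.
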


\begin{proof}
 Since we have $kg^pk^{-1} = (kgk^{-1})^p$ and $k[g,h]k^{-1} = [kgk^{-1},khk^{-1}]$ for $g,h,k \in G$, the subgroup $\C^p(G)$ is characteristic.
 The latter statement follows from the fundamental theorem for finitely generated abelian groups.
\end{proof}

Some simple examples of $G/\C^p(G)$ are listed in Table~\ref{tab:G/CpG}.

\begin{table}[h]
 \center
$\begin{array}{c|c}
 G & G/\C^p(G) \\\hline\hline
 (\Z_n,+) & \Z_{\gcd(n,p)} \\\hline 
 (\Q_{>0},\times) & (\Z_p)^{\oplus\{\text{prime numbers}\}} \\\hline
 (\R_{>0},\times) & 0 
\end{array}$
 \caption{Examples of $G/\C^p(G)$. Note that $\gcd(0,p)=p$.}
 \label{tab:G/CpG}
\end{table}

\begin{lemma}
 For groups $G$ and $H$, the following hold.
\begin{enumerate}
 \item $\C^p(G\times H) = \C^p(G)\times\C^p(H)$.
 \item A homomorphism $f\colon G\to H$ induces the homomorphism $\C^p(G)\to\C^p(H)$ denoted by $\C^p(f)$.
 Moreover, if $f$ is surjective \textup{(}resp.\ injective\textup{)}, then $\C^p(f)$ is also surjective \textup{(}resp.\ injective\textup{)}.
\end{enumerate}
\end{lemma}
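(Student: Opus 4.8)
The plan is to argue everything directly from the explicit generating set $\{g^p\}\cup\{[g,h]\}$ of $\C^p$, so that both parts become short computations.

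For part (1), I would prove the two inclusions separately. For ``$\subseteq$'': an arbitrary generator of $\C^p(G\times H)$ is either $(g,h)^p=(g^p,h^p)$ or $[(g_1,h_1),(g_2,h_2)]=([g_1,g_2],[h_1,h_2])$, and in either case each coordinate is by definition a generator of $\C^p(G)$ (resp.\ $\C^p(H)$); hence every generator of $\C^p(G\times H)$, and therefore the whole subgroup, lies in $\C^p(G)\times\C^p(H)$. For ``$\supseteq$'': under the injection $G\hookrightarrow G\times H$, $g\mapsto(g,1)$, a generator $g^p$ (resp.\ $[g_1,g_2]$) of $\C^p(G)$ maps to $(g,1)^p$ (resp.\ $[(g_1,1),(g_2,1)]$), which is a generator of $\C^p(G\times H)$; since the map is injective, the images of a generating set of $\C^p(G)$ form a generating set of $\C^p(G)\times\{1\}$, so $\C^p(G)\times\{1\}\subseteq\C^p(G\times H)$, and symmetrically $\{1\}\times\C^p(H)\subseteq\C^p(G\times H)$. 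As $\C^p(G)\times\C^p(H)$ is generated by these two subgroups, it is contained in $\C^p(G\times H)$.

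For part (2), the induced map is simply the restriction of $f$, so the only content is that this restriction has image in $\C^p(H)$: the generators $g^p$ and $[g_1,g_2]$ of $\C^p(G)$ are sent to $f(g)^p$ and $[f(g_1),f(g_2)]$, which are generators of $\C^p(H)$; thus $f(\C^p(G))\subseteq\C^p(H)$ and we set $\C^p(f):=f|_{\C^p(G)}$, whence functoriality ($\C^p(\id)=\id$, $\C^p(f'\circ f)=\C^p(f')\circ\C^p(f)$) is immediate. If $f$ is injective, so is its restriction $\C^p(f)$. If $f$ is surjective, then each generator $h^p=\C^p(f)(g^p)$ (with $h=f(g)$) and $[h_1,h_2]=\C^p(f)([g_1,g_2])$ of $\C^p(H)$ lies in the image of $\C^p(f)$, so $\C^p(f)$ is surjective.

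I do not expect a genuine obstacle: both statements reduce to tracking the explicit generators of $\C^p$ through the relevant homomorphisms, the only mild care being the bookkeeping that a subgroup of $G\times H$ containing the image of a generating set of $\C^p(G)$ must contain all of $\C^p(G)\times\{1\}$. Alternatively, one can package everything through the identification (implicit in Lemma~\ref{lem:ElemLem}) $\C^p(G)=\Ker\bigl(G\twoheadrightarrow G^{\ab}\otimes_{\Z}\Z_p\bigr)$: then (1) follows from $(G\times H)^{\ab}\otimes\Z_p\cong(G^{\ab}\otimes\Z_p)\times(H^{\ab}\otimes\Z_p)$, and (2) from naturality of this quotient together with right-exactness of $-\otimes\Z_p$ (for surjectivity) and a short diagram chase. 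I would nevertheless present the elementary generator argument as the main proof.
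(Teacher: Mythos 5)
Your proof is correct; the paper states this lemma explicitly ``without proofs'' as a basic fact, and your generator-tracking argument is exactly the routine verification the author is leaving to the reader. Both inclusions in (1) and the restriction/surjectivity/injectivity claims in (2) check out, and your alternative description of $\C^p(G)$ as the kernel of $G\twoheadrightarrow G^{\ab}\otimes_\Z\Z_p$ is also a valid (and tidy) repackaging, consistent with Lemma~\ref{lem:ElemLem}.
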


\begin{proof}
 (1) The inclusion ``$\leq$'' is clear.
 The opposite inclusion is shown by the equality $(g,h) = (g,1)(1,h)$.
 
 (2) We only prove the surjectivity of $\C^p(f)$.
 Let $h' \in \C^p(H)$.
 By definition, $h'$ is of the form $h_1^p[h_2,h_3]\cdots$.
 When $f$ is surjective, there is $g_i \in G$ such that $f(g_i) = h_i$ for each $i$, and then $f(g_1^p[g_2,g_3]\cdots) = h'$.
\end{proof}

The following lemma is a refinement of the well-known fact \cite[13.5.8]{Rob96} for a complete group $H$, where $H$ is said to be \emph{complete} if $\Out(H)$ is trivial.

\begin{lemma}\label{lem:Rob96}
 Let $G$ and $H$ be groups such that $H \vartriangleleft G$ and ${\Ad_g}|_H \in \Inn(H)$ for any $g\in G$.
 Then the sequence of groups
 \[1 \to Z(H) \xrightarrow{\phi} H\times C_G(H) \xrightarrow{\psi} G \to 1\]
 is exact, where $C_G(H)$ denotes the centralizer of $H$ in $G$, and $\phi(h):=(h,h^{-1})$, $\psi(h,g):=hg$.
\end{lemma}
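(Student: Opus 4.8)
The plan is to verify exactness at each of the three spots directly from the definitions, with the hypothesis $\Ad_g|_H \in \Inn(H)$ entering only at one step. As a preliminary I would record the fact, used repeatedly, that every element of $C_G(H)$ commutes elementwise with every element of $H$, and that $Z(H) = H \cap C_G(H)$, so that $Z(H)$ is contained in both factors of $H \times C_G(H)$ and is abelian. It then follows at once that $\phi(h) := (h, h^{-1})$ is a well-defined injective homomorphism $Z(H) \to H \times C_G(H)$ (injective because $\phi(h) = (1,1)$ forces $h = 1$; a homomorphism because $Z(H)$ is abelian), and that $\psi(h,g) := hg$ is a well-defined homomorphism, since
\[\psi\bigl((h_1,g_1)(h_2,g_2)\bigr) = h_1 h_2 g_1 g_2 = h_1 g_1 h_2 g_2 = \psi(h_1,g_1)\,\psi(h_2,g_2),\]
using $g_1 \in C_G(H)$.

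For exactness at $H \times C_G(H)$ I would compute $\Ker\psi$. If $hg = 1$ with $h \in H$ and $g \in C_G(H)$, then $g = h^{-1}$, and hence $h = g^{-1} \in C_G(H)$ as well, so $h \in H \cap C_G(H) = Z(H)$ and $(h,g) = (h,h^{-1}) = \phi(h)$. Conversely $\psi\bigl(\phi(h)\bigr) = h h^{-1} = 1$ for all $h \in Z(H)$. Therefore $\Ker\psi = \Im\phi$; combined with the injectivity of $\phi$, this also yields exactness at $Z(H)$.

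The only step that uses the standing hypothesis — and the only one requiring an idea rather than bookkeeping — is surjectivity of $\psi$. Given $g \in G$, the assumption $\Ad_g|_H \in \Inn(H)$ furnishes some $h \in H$ with $\Ad_g|_H = \Ad_h|_H$. Then for every $x \in H$,
\[(h^{-1}g)\,x\,(h^{-1}g)^{-1} = h^{-1}\bigl(g x g^{-1}\bigr)h = h^{-1}\bigl(h x h^{-1}\bigr)h = x,\]
so $h^{-1}g \in C_G(H)$, and $g = h \cdot (h^{-1}g) = \psi(h, h^{-1}g)$ lies in $\Im\psi$. Hence $\psi$ is surjective and the sequence is exact.

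I do not anticipate a genuine obstacle: the statement is a diagram-chase-level exercise once the preliminary facts about $C_G(H)$, $Z(H)$, and their commuting are laid down. The points most worth stating explicitly are that $\phi$ genuinely lands in $H \times C_G(H)$ (because $Z(H) \subseteq C_G(H)$) and that $\psi$ is a homomorphism of groups rather than merely a map of underlying sets — both of which rest on the elementwise commutation of $H$ with $C_G(H)$. It may also be worth remarking, to situate the result, that when $H$ is complete one has $Z(H) = 1$ and $\Aut(H) = \Inn(H)$, so the hypothesis is automatic and one recovers $G \cong H \times C_G(H)$, the version in \cite[13.5.8]{Rob96}.
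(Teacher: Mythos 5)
Your proof is correct and complete: the verification of exactness at each term is sound, and the only place the hypothesis $\Ad_g|_H\in\Inn(H)$ is needed is indeed the surjectivity of $\psi$, which you handle correctly by writing $g=h\cdot(h^{-1}g)$ with $h^{-1}g\in C_G(H)$. The paper states this lemma without proof (as a refinement of \cite[13.5.8]{Rob96}), so there is nothing to compare against; your argument is the standard direct verification one would expect, and your closing remark correctly identifies how the complete-group case of Robinson is recovered.
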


\begin{lemma}[see {\cite[Theorem~1]{HaMa97}}]\label{lem:HaMa97}
 Let $G$, $H$ be as in Lemma~\ref{lem:Rob96} and suppose $\C^p(G)=H$, $Z(H)=1$.
 Then $\C^p(H) = H$.
\end{lemma}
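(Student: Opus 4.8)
The plan is to deduce everything from Lemma~\ref{lem:Rob96} together with the behaviour of $\C^p$ under direct products. Since $Z(H)=1$, the map $\phi$ in Lemma~\ref{lem:Rob96} has trivial domain, so the exact sequence there degenerates to an isomorphism
\[\psi\colon H\times C_G(H) \xrightarrow{\ \sim\ } G,\qquad (h,g)\mapsto hg.\]
Thus, up to this identification, $G$ is the direct product $H\times C_G(H)$, and the subgroup $H\vartriangleleft G$ corresponds under $\psi$ to $H\times\{1\}$. Here I would check the nonobvious inclusion: if $hg\in H$ with $h\in H$ and $g\in C_G(H)$, then $g=h^{-1}(hg)\in H\cap C_G(H)=Z(H)=1$, so indeed $\psi^{-1}(H)=H\times\{1\}$.

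Next I would use that $\C^p$ is preserved by isomorphisms (it is characteristic by Lemma~\ref{lem:ElemLem}, or directly because $\psi$ carries the generating set $\{g^p\}\cup\{[g,h]\}$ of $\C^p$ of the source into that of the target, and likewise for $\psi^{-1}$). Applying $\psi^{-1}$ to the hypothesis $\C^p(G)=H$ then gives
\[\C^p\bigl(H\times C_G(H)\bigr)=\psi^{-1}\bigl(\C^p(G)\bigr)=\psi^{-1}(H)=H\times\{1\}.\]
On the other hand, the product formula for $\C^p$ (the direct-product lemma in this subsection) yields $\C^p(H\times C_G(H))=\C^p(H)\times\C^p(C_G(H))$. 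Comparing the two descriptions and projecting onto the first factor — recalling that $\C^p(H)\leq H$ always — we conclude $\C^p(H)=H$, which is the claim; as a byproduct one also gets $\C^p(C_G(H))=1$.

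The argument is short, so I do not expect a genuine obstacle. The one point that needs care is the identification $\psi^{-1}(H)=H\times\{1\}$: it is essential that the hypothesis is the equality $\C^p(G)=H$ of subgroups of $G$, rather than a mere abstract isomorphism, so that it can be transported verbatim through $\psi$; and the verification of that identification is precisely where the assumption $Z(H)=1$ (equivalently $H\cap C_G(H)=1$) is used a second time, after its first use in collapsing the exact sequence of Lemma~\ref{lem:Rob96}.
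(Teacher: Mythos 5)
Your proposal is correct and is exactly the argument the paper intends: the paper gives no written proof, saying only that the lemma ``follows from the above lemmas,'' namely the exact sequence of Lemma~\ref{lem:Rob96} (which collapses to the isomorphism $H\times C_G(H)\cong G$ when $Z(H)=1$) combined with the direct-product formula $\C^p(G\times H)=\C^p(G)\times\C^p(H)$. Your verification that $\psi^{-1}(H)=H\times\{1\}$ via $H\cap C_G(H)=Z(H)=1$ is the right point to make explicit, and the byproduct $\C^p(C_G(H))=1$ is a correct observation.
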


\begin{proof}
 Set $K:=C_G(H)$.
 The isomorphism $\psi$ induces $\C^p(\psi)\colon \C^p(H)\times\C^p(K) \xrightarrow{\cong} \C^p(G)=H$.
 Since $\C^p(\psi)$ sends $\C^p(H)\times\{1\}$ to $\C^p(H)$, we have $\C^p(K) \xrightarrow{\cong} H/\C^p(H)$, and hence $Z(\C^p(K))=\C^p(K)$.
 On the other hand, $\C^p(\psi)$ gives $Z(\C^p(H))\times Z(\C^p(K)) \xrightarrow{\cong} Z(H)=1$.
 Therefore $\C^p(K)=1$, and we conclude that $\C^p(\psi)\colon \C^p(H) \xrightarrow{\cong} H$.
\end{proof}

\begin{example}\label{ex:IsSnCp}
 Let us see whether $\SS_n$ is a $\C^p$-group.
 Since $\SS_1$ and $\SS_2$ are obviously $\C^p$-groups for any $p$, we suppose $n\geq3$.
 If $p$ is odd, then $\SS_n$ is still a $\C^p$-group by Example~\ref{ex:CpSymGrp}.
 Suppose that $p$ is even.
 If $n\neq6$, then $\SS_n$ is not a $\C^p$-group by Lemma~\ref{lem:HaMa97} and the fact that $\SS_n$ is complete except $n=2,6$.
 Furthermore, $\SS_6$ is not a $\C^p$-group (see Appendix~\ref{subsec:S6}).
\end{example}

\begin{lemma}\label{lem:FreeAbCp}
 Let $r$ be a non-negative integer.
 For a group $G$ whose abelianization $G^\ab$ is isomorphic to $\Z^{\oplus r}$, the quotient group $G/\C^p(G)$ is isomorphic to $(\Z_p)^{\oplus r}$.
\end{lemma}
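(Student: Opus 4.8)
The plan is to reduce everything to the abelianization. Since $\C^p(G)$ contains every commutator $[g,h]$, we have $[G,G] \leq \C^p(G)$, so the quotient map $G \to G/\C^p(G)$ factors through a surjection $\bar\pi\colon G^{\ab} \twoheadrightarrow G/\C^p(G)$ (this is also consistent with $G/\C^p(G)$ being abelian, by Lemma~\ref{lem:ElemLem}). The first task is to identify $\Ker\bar\pi = \C^p(G)/[G,G]$ as a subgroup of $G^{\ab}$. It is generated by the images in $G^{\ab}$ of the defining generators of $\C^p(G)$: the images of the commutators are trivial, and the image of a $p$th power $g^p$ is $p\cdot[g]$, where $[g]$ denotes the class of $g$ in $G^{\ab}$. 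As $[g]$ ranges over all of $G^{\ab}$, and since $\{px \mid x \in G^{\ab}\}$ is already a subgroup of the abelian group $G^{\ab}$ (it is closed under addition and negation, being the image of multiplication by $p$), we obtain $\Ker\bar\pi = pG^{\ab}$. Hence $G/\C^p(G) \cong G^{\ab}/pG^{\ab}$.

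To finish, I would substitute the hypothesis $G^{\ab} \cong \Z^{\oplus r}$: under this identification $pG^{\ab}$ corresponds to $p\Z^{\oplus r} = (p\Z)^{\oplus r}$, so
$G/\C^p(G) \cong \Z^{\oplus r}/(p\Z)^{\oplus r} \cong (\Z/p\Z)^{\oplus r} = (\Z_p)^{\oplus r}$, as claimed.

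I do not anticipate a serious obstacle here; the one point that deserves an explicit line is the identification $\C^p(G)/[G,G] = pG^{\ab}$, i.e.\ that the subgroup of $G^{\ab}$ generated by all $p$th powers is exactly the image of multiplication by $p$ — this is where commutativity of $G^{\ab}$ is essential, and it is the only place the hypothesis is used (through $G^{\ab}$ alone). In fact the same argument gives the cleaner identity $G/\C^p(G) \cong G^{\ab} \otimes_{\Z} \Z_p$ for an arbitrary group $G$, which simultaneously recovers the entries of Table~\ref{tab:G/CpG} and the finitely generated case of Lemma~\ref{lem:ElemLem}.
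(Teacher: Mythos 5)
Your proof is correct, and it takes a more direct route than the paper's. The paper never identifies the kernel of $G^{\ab}\twoheadrightarrow G/\C^p(G)$; instead it sandwiches the quotient: factoring through the abelianization shows $G/\C^p(G)$ is a quotient of $\Z^{\oplus r}$ of exponent dividing $p$, hence of the form $\bigoplus_{i=1}^r\Z_{p_i}$ with $p_i\mid p$, and the induced surjection $G/\C^p(G)\twoheadrightarrow \Z^{\oplus r}/\C^p(\Z^{\oplus r})=(\Z_p)^{\oplus r}$ then forces $p_i=p$ by comparing orders. You instead compute $\C^p(G)/[G,G]$ exactly, as the subgroup of $G^{\ab}$ generated by the images of the defining generators of $\C^p(G)$, which is $pG^{\ab}$ because $G\to G^{\ab}$ is onto and $pG^{\ab}$ is already a subgroup; this yields the clean identity $G/\C^p(G)\cong G^{\ab}/pG^{\ab}\cong G^{\ab}\otimes_{\Z}\Z_p$ with no finite-generation hypothesis and no appeal to the structure of finitely generated abelian groups. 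What your approach buys is generality and a one-line derivation of the finitely generated case of Lemma~\ref{lem:ElemLem} and of Table~\ref{tab:ExOfQuot}; what the paper's approach buys is that it reuses only statements already on the page (Lemma~\ref{lem:ElemLem} and functoriality of $\C^p$) rather than re-examining the generating set of $\C^p(G)$. Both are complete proofs of the stated lemma.
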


\begin{proof}
 Since the codomain of the projection $G \twoheadrightarrow G/\C^p G$ is abelian, it factors through $G^\ab \cong \Z^{\oplus r}$.
 Hence, $G/\C^p G \cong \bigoplus_{i=1}^r \Z_{p_i}$ for some $p_i \mid p$.
 The abelianization $G \twoheadrightarrow \Z^{\oplus r}$ induces a surjection $G/\C^p G \twoheadrightarrow \Z^{\oplus r}/\C^p(\Z^{\oplus r})=(\Z_p)^{\oplus r}$.
 It follows that $p_i=p$ for $i=1,\dots,r$.
\end{proof}

\begin{remark}
 If $p$ is prime, then Lemma~\ref{lem:FreeAbCp} directly follows from \cite[Theorem~3.4]{Sta65} or \cite[Lemma~2.3]{CoHa08b}.
\end{remark}

\subsection{Links in a rational homology 3-sphere}

The following lemma is a special case of \cite[Theorem~2.1]{HaMu78}.

\begin{lemma}\label{lem:SingHom}
 Let $L'$ be an $r$-component link in a rational homology 3-sphere $\Sigma'$ such that $H_1(L') \xrightarrow{\mathrm{incl}_\ast} H_1(\Sigma')$ is surjective.
 Then the following holds:
 \[H_\ast(\Sigma'\setminus L') \cong
\begin{cases}
 \Z & \text{if $\ast=0$}, \\
 \Z^{\oplus r} & \text{if $\ast=1$}, \\
 \Z^{\oplus(r-1)} & \text{if $\ast=2$}, \\
 0 & \text{otherwise}.
\end{cases}\]
\end{lemma}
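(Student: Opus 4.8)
The plan is to reduce the statement to the cited theorem \cite[Theorem~2.1]{HaMu78}, or, if one prefers a self-contained argument, to derive it from Alexander duality together with the long exact sequence of the pair $(\Sigma', \Sigma' \setminus L')$. I would carry out the self-contained version as follows. First I would fix a tubular neighborhood $N(L') \cong \bigsqcup_{i=1}^r (S^1 \times D^2)$ of the link and set $E := \Sigma' \setminus \operatorname{int} N(L')$, the compact link exterior, which is homotopy equivalent to $\Sigma' \setminus L'$; so it suffices to compute $H_\ast(E)$. Since $\Sigma'$ is a rational homology sphere we have $H_\ast(\Sigma';\Q) \cong H_\ast(S^3;\Q)$, but the integral statement is what is wanted, so I would instead use the integral homology of $\Sigma'$ directly: $H_0(\Sigma') = \Z$, $H_3(\Sigma') = \Z$ (fixing an orientation), and $H_1(\Sigma')$ finite, $H_2(\Sigma') = 0$ by Poincar\'e duality and the universal coefficient theorem.

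Next I would invoke Poincar\'e--Lefschetz duality for the compact oriented $3$-manifold $E$ with boundary $\partial E = \partial N(L') \cong \bigsqcup_{i=1}^r T^2$: $H_k(E) \cong H^{3-k}(E, \partial E)$. Then the long exact sequence of the pair $(E,\partial E)$, combined with the known homology of a disjoint union of $r$ tori, lets one compute $H^\ast(E,\partial E)$ in terms of $H^\ast(E)$ and $H^\ast(\partial E)$. The hypothesis that $\operatorname{incl}_\ast\colon H_1(L') \to H_1(\Sigma')$ is surjective is exactly what is needed to control the map $H_1(\partial E) \to H_1(E)$ (equivalently, after excision, $H_1(N(L')) \to H_1(\Sigma')$ hits everything, so the Mayer--Vietoris connecting maps behave as in the $S^3$ case): it forces $H_1(\Sigma' \setminus L')$ to be free of rank exactly $r$ rather than having extra torsion coming from $H_1(\Sigma')$. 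Chasing the Mayer--Vietoris sequence for $\Sigma' = E \cup N(L')$ glued along $\partial E$, using $H_\ast(N(L')) = H_\ast(\bigsqcup_i S^1)$ and the computed $H_\ast(\Sigma')$, pins down $H_0(E) = \Z$, $H_1(E) = \Z^{\oplus r}$, $H_2(E) = \Z^{\oplus(r-1)}$, and $H_{\geq 3}(E) = 0$.

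The main obstacle I anticipate is bookkeeping the torsion: a priori $H_1(\Sigma')$ is a nontrivial finite group, and without the surjectivity hypothesis $H_1(\Sigma' \setminus L')$ could pick up part of it, so one must check carefully at each stage of the Mayer--Vietoris or duality argument that the relevant connecting homomorphisms are surjective onto the torsion. Concretely, the surjectivity of $H_1(L') \to H_1(\Sigma')$ guarantees that in the Mayer--Vietoris sequence $H_1(\partial E) \to H_1(E) \oplus H_1(N(L')) \to H_1(\Sigma') \to H_0(\partial E) \to \cdots$ the map into $H_1(\Sigma')$ is already surjective from the $H_1(N(L'))$ summand, which collapses the contribution and yields a short exact sequence computing $H_1(E)$ as free of rank $r$; a parallel check handles $H_2(E)$. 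Since this is precisely the content of \cite[Theorem~2.1]{HaMu78} in the special case at hand, the cleanest writeup is simply to cite that theorem and observe that our hypotheses are a specialization of theirs, which is what I would do in the final text.
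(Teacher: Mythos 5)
Your final recommendation---to simply cite \cite[Theorem~2.1]{HaMu78} and note that the hypotheses here are a specialization---is exactly what the paper does: Lemma~\ref{lem:SingHom} is stated without proof, preceded only by the sentence identifying it as a special case of that theorem. One caution about your self-contained sketch: the Mayer--Vietoris step as you describe it does not by itself give freeness of $H_1(E)$. Surjectivity of the $H_1(N(L'))$ summand onto $H_1(\Sigma')$ still only yields the short exact sequence $0 \to \Z^{\oplus r} \to H_1(E) \to H_1(\Sigma') \to 0$ (meridians generating the kernel), which leaves room for torsion; to kill it you must observe that the hypothesis forces $H_1(\partial E) \to H_1(E)$ to be surjective (meridians hit the kernel, longitudes hit generators of the quotient), whence $H_1(E,\partial E)$ embeds in $H_0(\partial E)$ and is free, and then Lefschetz duality plus universal coefficients shows $\operatorname{Ext}(H_1(E),\Z)=0$. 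Since you flag the torsion bookkeeping as the main obstacle and ultimately defer to the citation, the proposal is acceptable, but that duality step is the essential missing ingredient if you wanted the argument to stand on its own.
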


\begin{remark}\label{rem:Meridian}
 If $H_1(\Sigma') \cong \Z_p$ and $r=1$, then a meridian of a knot $L'$ corresponds to $\pm p \in\Z \cong H_1(\Sigma'\setminus L')$.
\end{remark}

%%%%%%%%%%%%%%%%%%%%%%%%
\section{Main theorem and corollaries}\label{sec:ThmCor}
Recall that we are interested in knowing whether a knot $K$ in $S^3$ is represented as the preimage of a knot $K'$ in a lens space.

\subsection{Application of $\C^p$ to covering spaces}
Let $\pi\colon \Sigma \to \Sigma'$ be a $p$-fold cyclic cover, where $\Sigma$ is an integral homology 3-sphere.
One can check that $H_\ast(\Sigma') \cong H_\ast(L(p,q))$.

\begin{reptheorem}{thm:CovSpCp}
 Let $K'$ be a knot in $\Sigma'$ with preimage a knot $K$.
 Then the image of $\pi_\ast\colon \pi_1(\Sigma\setminus K) \rightarrowtail \pi_1(\Sigma'\setminus K')$ coincides with $\C^p(\pi_1(\Sigma'\setminus K'))$.
\end{reptheorem}

\begin{proof}
 Set $G:=\pi_1(\Sigma\setminus K)$ and $G':=\pi_1(\Sigma'\setminus K')$.
 By Lemma~\ref{lem:SingHom} and Remark~\ref{rem:Connected}, the surjection $(G')^\ab \twoheadrightarrow G'/\C^pG'$ implies $G'/\C^pG' \cong \Z_{p'}$ for some $p' \mid p$.
 
 On the other hand, the covering map $\pi$ induces an exact sequence
 \[1 \to G \xrightarrow{\pi_\ast} G' \to \Z_{p} \to 1.\]
 Using $\C^pG \leq \pi_\ast(G) \leq G'$, we have
 \[p' = [G':\C^pG'] = [G':\pi_\ast(G)][\pi_\ast(G):\C^pG'] \geq p.\]
 It follows that $p'=p$, and thus $\pi_\ast(G) = \C^pG'$.
\end{proof}

\begin{remark}\label{rem:Connected}
 Let $r$ be the number of connected components of $\pi^{-1}(K')$ and $k'$ the positive integer such that $p/k' \in \Z$ is the order of $[K'] \in H_1(\Sigma')$.
 Fix the isomorphism $H_1(\Sigma') \cong \Z_p$ so that $[K']$ corresponds to $\overline{k'}$
 Then we have $r=k'$.
 Indeed, we first obtain the canonical isomorphism $H_1(\Sigma') \cong \pi_1(\Sigma')/\pi_\ast\pi_1(\Sigma)$ since the projection $\pi_1(\Sigma') \twoheadrightarrow \pi_1(\Sigma')/\pi_\ast\pi_1(\Sigma) \cong \Z_p$ factors through $\pi_1(\Sigma')^\ab$.
 Choose a connected component $K$ of $\pi^{-1}(K')$ and a base point $\ast \in K$, and set $\ast':=\pi(\ast)$.
 It follows from $\pi(K)={K'}^{p/r}$ as based loops and $[K]=0$ that we have $\frac{p}{r}\times\overline{k'}=0$, and thus $r \mid k'$.
 On the other hand, the equality $\frac{p}{k'}\times[K']=0$ implies that ${K'}^{p/k'} \in \pi_1(\Sigma',\ast')$ belongs to $\pi_\ast\pi_1(\Sigma)$, and hence ${K'}^{p/k'}$ lifts to $\Sigma$.
 Then we conclude $\frac{p}{r} \mid \frac{p}{k'}$, namely $k' \mid r$.
 
 In particular, $\pi^{-1}(K')$ is connected if and only if $[K']$ generates $H_1(\Sigma)$.
 The case of $\Sigma=S^3$ is discussed in \cite[Proposition~2.2]{Man14}.
\end{remark}

As mentioned in Section~\ref{sec:Intro}, the following corollary is already known.
However, using Theorem~\ref{thm:CovSpCp}, we give an alternative proof of it.

\begin{repcorollary}{cor:Out}
 A knot $K$ in $S^3$ with $\Out(G(K))=1$ cannot be represented as the preimage of any knot in any lens space.
\end{repcorollary}

\begin{proof}
 Since $\Out(G(T_{m,n})) \cong \Z_2$ as it was proven in \cite{Sch24}, then $K$ is not a torus knot.
 Hence, we conclude that $Z(G(K))=1$ by \cite{BuZi66}, and thus $G(K)$ is complete.
 
 Assume that there exists a knot $K'$ in $L(p,q)$ whose preimage is isotopic to $K$.
 Then we have $G(K) = \C^p(\pi_1(L(p,q)\setminus K'))$ by Theorem~\ref{thm:CovSpCp}.
 Since $G(K)$ is complete, by Lemma~\ref{lem:HaMa97}, we conclude $\C^pG(K) = G(K)$.
 However, this contradicts Lemma~\ref{lem:FreeAbCp}.
\end{proof}

\subsection{Application of Theorem~\ref{thm:CovSpCp} to torus knots}
The aim of this subsection is to prove the next corollary of Theorem~\ref{thm:CovSpCp}.

\begin{repcorollary}{cor:Torus}[{\cite[Theorem~3.1]{Har81}}]
 Let $m,n,p \in \Z_{\geq2}$ with $\gcd(m,n)=1$.
 There exists an integer $q$ and a knot $K'$ in $L(p,q)$ such that $\pi^{-1}(K')$ is isotopic to the torus knot $T_{m,n}$ or its mirror image if and only if $\gcd(mn,p)=1$.
\end{repcorollary}

\begin{lemma}\label{lem:HomOfGrp}
 Let $m,n,p$ be positive integers and $G$ a group satisfying the three conditions
\begin{enumerate}
 \item $\C^p(G) \cong \Z_m\ast\Z_n$,
 \item $G/\C^p(G) \cong \Z_p$,
 \item $|G^\ab|=mnp$.
\end{enumerate}
 Then $\gcd(mn,p)=1$.
\end{lemma}

\begin{proof}
 We first consider the five-term exact sequence
 \[0=H_2(\Z_p) \to (\Z_m\oplus\Z_n)_{\Z_p} \to G^\ab \to \Z_p \to 0\]
 for the short exact sequence $1 \to \Z_m\ast\Z_n \to G \to \Z_p \to 1$ coming from (1) and (2).
 By (3), we conclude that $(\Z_m\oplus\Z_n)_{\Z_p} = \Z_m\oplus\Z_n$.
 
 Set $H:=\Z_m\ast\Z_n$.
 Let $g \in G$ and we show that ${\Ad_g}|_{H} \in \Inn(H)$.
 It is known that $\Aut(H)$ is generated by $\Inn(H)$ and $\{\phi_{i,j} \mid i \in (\Z_m)^\times,\ j \in (\Z_n)^\times\}$, where $\phi_{i,j}$ is defined by $\phi_{i,j}(a)=a^i$, $\phi_{i,j}(b)=b^j$.
 Therefore, ${\Ad_g}|_{H} = {\Ad_w}\circ\phi_{i,j}$ for some $1\leq i<m$, $1\leq j<n$ and $w \in H$, and then the induced automorphism $({\Ad_g}|_{H})^\ab$ of $\Z_m\oplus\Z_n$ sends $(1,0)$, $(0,1)$ to $(i,0)$, $(0,j)$ respectively.
 On the other hand, we have $(\Ad_g)^\ab = \id_{G^\ab}$, and hence $i=1$ and $j=1$.
 
 It follows from ${\Ad_g}|_{H} \in \Inn(H)$, $Z(H)=1$ and (1) that one can use Lemma~\ref{lem:HaMa97} and conclude $\C^p(H)=H$.
 Finally, since the abelianization induces a surjection $H/\C^p(H) \twoheadrightarrow (\Z_m\oplus\Z_n)/\C^p(\Z_m\oplus\Z_n) = \Z_{\gcd(m,p)}\oplus\Z_{\gcd(n,p)}$, we get $\gcd(mn,p)=1$.
\end{proof}

\begin{remark}
 The three conditions in Lemma~\ref{lem:HomOfGrp} also imply that $H_\ast(G) \cong H_\ast(\Z_m\ast\Z_n\ast\Z_p)$.
 Indeed, it follows from ${\Ad_g}|_{H} \in \Inn(H)$ that the action of $G$ on $H_\ast(\Z_m\ast\Z_n)$ is trivial, and so is the action of $\Z_p$ on $H_\ast(\Z_m\ast\Z_n)$.
 The conclusion $\gcd(mn,p)=1$ of Lemma~\ref{lem:HomOfGrp} implies that the $E^2$-term of the Lyndon-Hochschild-Serre spectral sequence for $1 \to \Z_m\ast\Z_n \to G \to \Z_p \to 1$ is as follows:
 \[E^2_{s,t}=H_s(\Z_p;H_t(\Z_m\ast\Z_n))=
\begin{cases}
 \Z & \text{if $s=t=0$,} \\
 \Z_m\oplus\Z_n & \text{if $s=0$, $t\geq1$ is odd,} \\
 \Z_{p} & \text{if $s\geq1$ is odd, $t=0$,} \\
 0 & \text{otherwise.}
\end{cases}\]
 Therefore, all differentials $d^r$ are zero for $r\geq2$.
 Using $\gcd(mn,p)=1$ again, we have $H_\ast(G) \cong H_\ast(\Z_m\ast\Z_n\ast\Z_p)$.
\end{remark}

\begin{proof}[Proof of Corollary~\ref{cor:Torus}]
 If $\gcd(mn,p)=1$, then a construction of a desired knot $K'$ was given in \cite[Theorem~3.1]{Har81}.
 
 Suppose that there exists $K'$ as in the statement, and set $\pi_1':=\pi_1(L(p,q)\setminus K')$.
 The covering map $\pi\colon S^3\setminus T_{m,n} \to L(p,q)\setminus K'$ induces the exact sequence
 \[1 \to G(T_{m,n})=\ang{a,b \mid a^m=b^n} \xrightarrow{\pi_\ast} \pi_1' \to \Z_p \to 1.\]
 Since the center $Z(G(T_{m,n})) = \ang{a^m} = \Z$ is characteristic in $G(T_{m,n})$, the subgroup $N:=\pi_\ast(\ang{a^m})$ of $\pi_1'$ is normal.
 We deduce the exact sequence
\begin{align*}
 1 \to \ang{a,b \mid a^m=1=b^n} \xrightarrow{\pi_\ast} \pi_1'/N \to \Z_p \to 1
\end{align*}
 from the third isomorphism theorem.
 
 Here, we define the group $G$ by $G:=\pi_1'/N$.
 Then Theorem~\ref{thm:CovSpCp} shows that
 \[\C^p(G) = \C^p(\pi_1')/N = G(T_{m,n})/\ang{a^m} = \Z_m\ast\Z_n.\]
 Therefore, $G$ satisfies (1) and (2) in Lemma~\ref{lem:HomOfGrp}, and hence it suffices to prove (3).
 By Lemma~\ref{lem:SingHom}, the five-term exact sequence for $1 \to N \to \pi_1' \to G \to 1$ is as follows:
 \[H_2(\pi_1') \to H_2(G) \to \Z_G \xrightarrow{\phi} \Z \to H_1(G) \to 0.\]
 Since the generator $a^m$ of $N$ is decomposed as $\left(\prod_{i=1}^{n}g_i\mu g_i^{-1}\right)^m$ in $\pi_1'$ for a meridian $\mu$ of $K'$ and some $g_i \in \pi_1'$, we conclude $\phi(\overline{1})=mnp$ by Remark~\ref{rem:Meridian}.
 We thus get $H_1(G)=\Z_{mnp}$ (and $\Z_G = \Z$).
\end{proof}

The above proof is not enough to assert that $G(T_{m,n})$ is not a $\C^p$-group when $\gcd(mn,p)\neq1$.
We finally discuss integers $q$ in Corollary~\ref{cor:Torus}.

\begin{proposition}[{\cite[Remark~3.7]{Chb03}}] \label{prop:Chb03}
 Let $m,n,p$ be integers with $|m|,|n|,p \geq2$ and $\gcd(m,n)=1$.
 Suppose that the torus knot $T_{m,n}$ is isotopic to the preimage of a knot in $L(p,q)$ for some $q$.
 Then $q \equiv m(1-pp^\ast)/n \mod p$, where $p^\ast$ is an integer satisfying $pp^\ast \equiv 1 \mod n$.
\end{proposition}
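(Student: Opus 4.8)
The plan is to extract $q$ from the freely periodic $\Z_p$-action on $(S^3,T_{m,n})$ underlying the covering $\pi$. By Corollary~\ref{cor:Torus} the hypothesis forces $\gcd(mn,p)=1$, so $p^\ast$ with $pp^\ast\equiv1\pmod n$ exists; moreover $n\cdot\tfrac{1-pp^\ast}{n}=1-pp^\ast\equiv1\pmod p$, so $m\,\tfrac{1-pp^\ast}{n}\equiv mn^{-1}\pmod p$, where $n^{-1}$ is taken modulo $p$. Hence it suffices to prove $q\equiv mn^{-1}\pmod p$, once a consistent choice of orientations is fixed (the standard ambiguity $L(p,q)\cong L(p,q')$ with $q'\equiv\pm q^{\pm1}$ corresponds to replacing $T_{m,n}$ by its mirror image and to relabelling $m\leftrightarrow n$, which is why the statement allows $m,n<0$).

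First I would put the action into standard form. The complement $S^3\setminus\nu(T_{m,n})$ is Seifert fibred over the orbifold $D^2(m,n)$, and since $m,n\geq2$ it is none of the exceptional pieces ($S^1\times D^2$, $T^2\times I$, the twisted $I$-bundle over the Klein bottle), so this fibration is unique up to isotopy; by the theory of finite group actions on Seifert fibred spaces the deck group may therefore be conjugated to act fibrewise-preservingly. Because $\gcd(m,n)=1$ forces $m\neq n$, the two exceptional fibres have distinct multiplicities and cannot be swapped, so the induced finite-order orbifold self-map of $D^2(m,n)$ is isotopic to the identity; and $\Hom(\Z_m\ast\Z_n,\Z_p)=0$ since $\gcd(mn,p)=1$, which rigidifies vertical actions. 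Consequently, up to conjugacy the generator $\tau$ is \emph{vertical}: it rotates each circle fibre by $c/p$ for one integer $c$, and freeness on $S^3$ amounts to $\gcd(c,p)=1$.

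Next I would identify the quotient. A vertical action extends uniquely over $\nu(T_{m,n})$ and, for fixed $c$, is conjugate to the restriction of the linear action $(z,w)\mapsto(\zeta^{mc}z,\zeta^{nc}w)$ on $S^3\subset\C^2$ with $\zeta=e^{2\pi i/p}$, which visibly preserves $T_{m,n}=\{(r_1e^{2\pi imt},r_2e^{2\pi int})\}$ and rotates it by $c/p$. Hence $(S^3,T_{m,n})/\langle\tau\rangle$ is diffeomorphic to the quotient of $(S^3,T_{m,n})$ by this linear action, namely $(L(p,q),K')$ with $q\equiv(mc)^{-1}(nc)=mn^{-1}\pmod p$; note $c$ drops out, so $q$ is pinned down. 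With the first paragraph this gives the congruence. (Alternatively, once $\tau$ is known to be vertical, $q$ can be read off from the Seifert invariants of the quotient, or from its linking form, instead of comparing with the linear model.)

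The hard part will be the rigidity and normalization inputs. Making the reduction to a fibrewise-preserving action rigorous requires either the uniqueness of the Seifert fibration on the bounded complement together with the classification of finite-order orbifold self-maps of $D^2(m,n)$ sketched above, or else an appeal to the resolution of the spherical space-form problem (every free finite action on $S^3$ is conjugate to a linear one), which is heavier but cleaner. The remaining nuisance is the orientation and framing bookkeeping needed to obtain $q$ on the nose, rather than merely up to $q\mapsto q^{\pm1}$ and sign, in agreement with Chbili's normalization. I expect this convention chase, not the rigidity step, to be the most error-prone part of the argument.
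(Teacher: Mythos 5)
Your proposal is essentially correct in outline but takes a genuinely different route from the paper. The paper's proof is much shorter because it outsources all the rigidity to the uniqueness theorem for symmetries of prime knots (Sakuma, Boileau--Flapan): since $T_{m,n}$ is prime, $q$ is determined up to $q\mapsto q^{\pm1}$, so it suffices to \emph{exhibit one} knot in $L(p,q)$ with $q=m(1-pp^\ast)/n$ whose preimage is $T_{m,n}$, which is done by an explicit $n$-braid construction (citing Manfredi) using $p\mid m-nq$. You instead classify the deck transformation directly, conjugating it to a vertical/linear action via the uniqueness of the Seifert fibration on the torus knot exterior and then reading off $q$ from the linear model; this in effect reproves the uniqueness theorem in the torus knot case, at the cost of heavier equivariant machinery but without needing the braid construction. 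Two caveats on your version: first, the computation $q\equiv(mc)^{-1}(nc)$ gives $m^{-1}n$, not $mn^{-1}$ --- this is harmless only because $L(p,q)\cong L(p,q^{-1})$ and the paper itself acknowledges that $q$ is pinned down only up to inversion, but you should not present $mn^{-1}$ as what the formula yields; second, your ``cleaner'' alternative via the spherical space form problem is not actually sufficient on its own, since linearizing the free action on $S^3$ says nothing about the position of the invariant knot --- the Seifert-fibration (equivariant) argument is the one that must be carried out, and its details (fibre-preserving isotopy of the action, triviality of the induced orbifold map, coherence of the fibrewise rotation number) are precisely what the paper avoids by citing \cite{Sak86} and \cite{BoFl87}.
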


\begin{proof}
 We first remark that $\gcd(mn,p)=1$ by Corollary~\ref{cor:Torus}.
 Hence, $p^\ast$ exists and $\gcd(p,m(1-pp^\ast)/n)=\gcd(p,1-pp^\ast)=1$ holds.
 
 Since $T_{m,n}$ is a prime knot, by \cite[Theorem~5]{Sak86} or \cite[Theorem~1]{BoFl87}, $q$ is uniquely determined up to taking its inverse in $\Z_p$.
 (Recall that $L(p,q_0) \cong L(p,q_1)$ as oriented manifolds if and only if $q_0 \equiv q_1^{\pm1} \mod p$.)
 Therefore, it suffices to see that there exists a knot in $L(p,q)$ whose preimage is $T_{m,n}$ for $q=m(1-pp^\ast)/n$.
 Set $q:=m(1-pp^\ast)/n$.
 Then we see that $p \mid m-nq$.
 Hence, using a suitable $n$-braid (see \cite[Proposition~3.3]{Man14}), one constructs a desired knot in $L(p,q)$.
\end{proof}

%%%%%%%%%%%%%
\appendix
\section{Braid groups and symmetric groups}\label{sec:SymGrp}

\subsection{Braid groups}\label{subsec:BraidGroup}
In this subsection, we discuss the question of whether the $n$th braid group is a $\C^p$-group.
Since $B_3 \cong G(T_{3,2})$, the answer gives another proof of Corollary~\ref{cor:Torus} for the trefoil $T_{3,2}$.
First note that a quotient group of a $\C^p$-group $G$ is not a $\C^p$-group in general.
However, the following lemma shows that if $H \vartriangleleft G$ is characteristic, then $G/H$ is also a $\C^p$-group.

\begin{lemma}[see {\cite[Theorem~1]{Sun79}}]\label{lem:Sun79}
 Let $G$ be a $\C^p$-group and $f\colon G \twoheadrightarrow H$ a surjective homomorphism whose kernel is a characteristic subgroup of $G$.
 Then $H$ is also a $\C^p$-group.
\end{lemma}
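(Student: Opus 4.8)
The plan is to unwind the definition of a $\C^p$-group for $G$ and push the witnessing group through the quotient. Suppose $G \cong \C^p(G_0)$ for some group $G_0$, and let $q\colon G \twoheadrightarrow G'$ be the given surjection with $\Ker q =: H$ characteristic in $G$. The first step is to realize $G$ inside $G_0$, so that $H$ becomes a subgroup of $G_0$ that is normal in $\C^p(G_0)$; since $\C^p(G_0)$ is itself characteristic in $G_0$ by Lemma~\ref{lem:ElemLem}, and $H$ is characteristic in $\C^p(G_0)$, the subgroup $H$ is in fact characteristic in $G_0$ (characteristicity is transitive for the whole-group case). Hence $H \vartriangleleft G_0$ and we may form $G_0/H$.

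Next I would identify $\C^p(G_0/H)$ with $\C^p(G_0)/H = G/H \cong G'$. One inclusion is immediate: the projection $G_0 \twoheadrightarrow G_0/H$ restricts to a surjection $\C^p(G_0) \to \C^p(G_0/H)$ by Lemma~1.10(2) (the second of the two unnumbered basic lemmas), and its image contains every $p$th power and every commutator of $G_0/H$ because these are images of the corresponding elements of $G_0$; so $\C^p(G_0/H)$ is exactly the image of $\C^p(G_0)$, i.e.\ $\C^p(G_0)H/H = \C^p(G_0)/H$ using $H \le \C^p(G_0)$. Therefore $G_0/H$ is a group whose $\C^p$-subgroup is isomorphic to $G/H \cong G'$, which is precisely the assertion that $G'$ is a $\C^p$-group.

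The only genuine subtlety — and the step I would flag as the main obstacle — is the claim that $H$ is characteristic in $G_0$, not merely in $\C^p(G_0)$. In general ``characteristic in characteristic'' does give ``characteristic'' (unlike ``normal in normal''): if $H$ is invariant under every automorphism of $\C^p(G_0)$, and $\C^p(G_0)$ is invariant under every automorphism of $G_0$, then each $\alpha \in \Aut(G_0)$ restricts to an automorphism of $\C^p(G_0)$ and hence fixes $H$ setwise. So this is routine once stated carefully, but it is the point where the hypothesis ``$H$ characteristic'' (rather than just normal) is used, and it is worth spelling out since the analogous statement fails for mere normality — indeed the remark preceding the lemma already warns that a quotient of a $\C^p$-group need not be a $\C^p$-group. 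I would present the argument in that order: reduce to $G = \C^p(G_0)$, observe transitivity of ``characteristic'', and then compute $\C^p(G_0/H)$.
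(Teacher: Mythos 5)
Your proof is correct and complete. Note that the paper itself states this lemma without proof, deferring to Sun's Theorem~1, so there is no in-paper argument to compare against; your argument --- realize $G=\C^p(G_0)$, upgrade $H$ from characteristic in $\C^p(G_0)$ to characteristic (hence normal) in $G_0$ via transitivity of ``characteristic in characteristic,'' and then identify $\C^p(G_0/H)$ with $\C^p(G_0)H/H=\C^p(G_0)/H\cong G'$ using that $\C^p$ of a surjective image is the image of $\C^p$ --- is exactly the standard one, and you correctly isolate the one step where the characteristicity hypothesis (as opposed to mere normality) is actually used.
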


\begin{proof}
Suppose $\C^p(G')=G$ for some group $G'$.
Then we have
$$\C^p(G'/\Ker f) = \C^p(G')/(\Ker f \cap \C^p(G')) = G/\Ker f \cong H.$$
Hence, $H$ is a $\C^p$-group.
\end{proof}

\begin{corollary}\label{cor:Braid}
The $n$th braid group $B_n$ is not a $\C^p$-group for $p$ even.
\end{corollary}

\begin{proof}
Since the $n$th pure braid group $P_n:=\Ker[B_n \twoheadrightarrow \SS_n]$ is characteristic (\cite[Theorem 3]{Art47}), by Lemma~\ref{lem:Sun79}, it suffices to prove that $\SS_n$ is not a $\C^p$-group for $p$ even.
Finally, Example~\ref{ex:IsSnCp} completes the proof.
\end{proof}

It is not known to us whether $B_n$ is a $\C^p$-group for $p$ odd.

\subsection{The 6th symmetric group}\label{subsec:S6}
In this subsection, we prove that $\SS_6$ is not a $\C^p$-group for $p$ even (see Example~\ref{ex:CpSymGrp}).
The following argument is based on \cite{HaMa97}.
Let $\iota$ denote the homomorphism $\SS_6 \rightarrowtail \Aut(\SS_6)$ defined by $\iota(\sigma):=\Ad_\sigma$.

\begin{lemma}[see {\cite[p.\ 125]{HaMa97}}]\label{lem:HaMa97S6}
 For $p$ even, $\C^p(\Aut(\SS_6)) = \iota(\A_6)$.
\end{lemma}

\begin{proof}
 It follows from $[\SS_6,\SS_6]=\A_6$ that $\C^p(\Aut(\SS_6)) \geq \iota(\A_6)$.
 One has to show that $\phi^p$ and $[\phi,\phi']$ belong to $\iota(\A_6)$ for any $\phi,\phi' \in \Aut(\SS_6)$.
 We prove only $\phi^p \in \iota(\A_6)$ for $\phi \notin \Inn(\SS_6)$, and the remainder is shown similarly.
 
 It is known that there exists $\psi \in \Aut(\SS_6)\setminus\Inn(\SS_6)$ whose order is 10.
 Since $\Out(\SS_6) \cong \Z_2$, we have $\psi^2=\Ad_\tau$ for some $\tau \in \SS_6$.
 It follows from $\psi^{10}=\id_{\SS_6}$ that $\tau^5=1$, and thus $\tau \in \A_6$.
 Hence, $\phi$ is written as $\phi=\psi\circ\Ad_\sigma$ for some $\sigma \in\SS_6$.
 Then we have
 \[(\psi\circ\Ad_\sigma)^p = (\psi^2\circ\Ad_{\psi^{-1}(\sigma)\sigma})^{p/2} = \Ad_{(\tau\psi^{-1}(\sigma)\sigma)^{p/2}}.\]
 Since $\psi^{-1}(\sigma)\sigma \in \A_6$, we conclude $\phi^p \in \iota(\A_6)$.
\end{proof}

\begin{lemma}[see {\cite[Theorem~3]{HaMa97}}]\label{lem:HaMa97Aut}
 Let $p$ be a positive integer and $G$ a $\C^p$-group.
 Then $\Inn(G) \leq \C^p(\Aut(G))$.
\end{lemma}

\begin{proof}
 Suppose $G=\C^p(G')$ and recall that $G$ is a characteristic subgroup of $G'$.
 The composite map $\Inn(G) \to \Inn(G') \to \Aut(G)$ coincides with the inclusion.
 Since $\Inn(G)=\C^p\Inn(G')$, we have the inclusion $\Inn(G) \hookrightarrow \C^p\Aut(G)$.
\end{proof}

\begin{proposition}
 $\SS_6$ is not a $\C^p$-group for $p$ even.
\end{proposition}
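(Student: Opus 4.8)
The plan is to derive a contradiction by confronting Lemma~\ref{lem:HaMa97S6} with Lemma~\ref{lem:HaMa97Aut}. First I would record that for odd $p$ one has $\C^p\SS_6=\SS_6$ by Example~\ref{ex:CpSymGrp}, so $\SS_6$ is then trivially a $\C^p$-group; the content of the proposition therefore lies in the case $p\in2\Z_{\geq1}$, and I would restrict to even $p$ from the outset (this is also exactly the range in which Lemma~\ref{lem:HaMa97S6} is available).

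So suppose, for contradiction, that $\SS_6$ is a $\C^p$-group with $p$ even. Applying Lemma~\ref{lem:HaMa97Aut} to the $\C^p$-group $G=\SS_6$ yields $\C^p(\Aut(\SS_6))\geq\Inn(\SS_6)$. On the other hand, Lemma~\ref{lem:HaMa97S6} identifies $\C^p(\Aut(\SS_6))=\Im\alpha$. Combining these gives $\Inn(\SS_6)\leq\Im\alpha$. To finish I would compare orders: since $Z(\SS_6)=1$, the conjugation homomorphism $\SS_6\to\Aut(\SS_6)$ is injective with image $\Inn(\SS_6)$, so $|\Inn(\SS_6)|=720$; and $\Im\alpha$ is the image of the subgroup $\A_6$ under that same injective map, hence $|\Im\alpha|=360$. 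In fact $\Im\alpha$ is an index-$2$ subgroup of $\Inn(\SS_6)$, so it cannot contain $\Inn(\SS_6)$, a contradiction. Hence $\SS_6$ is not a $\C^p$-group.

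I do not expect a genuine obstacle: the two lemmas of this subsection carry all the weight, and what remains is a cardinality count. The only points meriting a moment's care are (i) that Lemma~\ref{lem:HaMa97Aut}, stated for an arbitrary $\C^p$-group, is legitimately applied here with $G=\SS_6$, which is exactly what "$\SS_6$ is a $\C^p$-group" asserts; and (ii) the identification of $\Im\alpha$ as a proper (index-$2$) subgroup of $\Inn(\SS_6)$, which hinges on $Z(\SS_6)=1$, so that $\A_6\hookrightarrow\SS_6\rightarrowtail\Aut(\SS_6)$ has image precisely the copy of $\A_6$ inside $\Inn(\SS_6)$ rather than something larger.
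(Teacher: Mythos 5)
Your proof is correct and uses the same two ingredients as the paper (Lemmas~\ref{lem:HaMa97Aut} and \ref{lem:HaMa97S6} played off against each other); you even streamline the paper's argument slightly by deriving the contradiction directly from $\Inn(\SS_6)\leq\Im\alpha$ and a cardinality count, whereas the paper first pins down $\C^p(\Aut(\SS_6))=\Inn(\SS_6)$ via the surjection $\Aut(\SS_6)/\C^p\Aut(\SS_6)\twoheadrightarrow\Z_2/\C^p\Z_2=\Z_2$ before invoking Lemma~\ref{lem:HaMa97S6}. Your explicit restriction to even $p$ matches the intended scope of the subsection.
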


\begin{proof}
 Assume that $\SS_6$ is a $\C^p$-group.
 By Lemma~\ref{lem:HaMa97Aut}, one has $\iota(\SS_6) \leq \C^p(\Aut(\SS_6))$, and thus
 \[2 = [\Aut(\SS_6):\iota(\SS_6)] = [\Aut(\SS_6):\C^p{\Aut(\SS_6)}][\C^p{\Aut(\SS_6)}:\iota(\SS_6)].\]
 On the other hand, we see that $\Aut(\SS_6)/\C^p{\Aut(\SS_6)} \twoheadrightarrow \Z_2/\C^p\Z_2 = \Z_2$.
 Hence, $\C^p{\Aut(\SS_6)} = \iota(\SS_6)$.
 This contradicts Lemma~\ref{lem:HaMa97S6}.
\end{proof}

\subsection{Alternating groups}
We finally determine the group $\C^p(\A_n)$.

\begin{proposition}
 For $p \in\Z_{\geq 2}$ and $n \in \Z_{\geq3}$,
 \[\C^p(\A_n) = 
\begin{cases}
 1 & \text{if $n=3$ and $3 \mid p$,} \\
 V_4 & \text{if $n=4$ and $3 \mid p$,} \\
 \A_n & \text{otherwise,}
\end{cases}\]
 where $V_4$ is the Klein four-group.
\end{proposition}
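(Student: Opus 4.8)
The plan is to reduce the whole computation to the abelianization of $\A_n$. First I would record the general principle already used implicitly in the proof of Lemma~\ref{lem:FreeAbCp}: since $G/\C^p(G)$ is abelian by Lemma~\ref{lem:ElemLem}, the projection $G \twoheadrightarrow G/\C^p(G)$ factors through $G^{\ab}$; moreover $\C^p(G)$ contains $[G,G]$ by definition, and its image in $G^{\ab}$ is the subgroup $p\cdot G^{\ab}$ generated by the $p$th powers. Putting these together, $\C^p(G)$ is exactly the preimage of $p\cdot G^{\ab}$ under $G \twoheadrightarrow G^{\ab}$ (given $\bar x = p\bar g$, write $x = g^p\cdot(g^{-p}x)$ with $g^{-p}x \in [G,G]$). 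So the computation of $\C^p(\A_n)$ amounts to knowing $\A_n^{\ab}$ together with $[\A_n,\A_n]$.

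Next I would split into the three ranges of $n$. For $n\geq5$ the group $\A_n$ is simple and nonabelian, hence perfect, so $\A_n^{\ab}=1$ and $\C^p(\A_n)\supseteq[\A_n,\A_n]=\A_n$; this gives $\C^p(\A_n)=\A_n$ with no condition on $p$. For $n=3$ we have $\A_3\cong\Z_3$, and the first row of Table~\ref{tab:ExOfQuot} (equivalently $\C^p(\Z_3)=\ang{g^p}$) gives $\C^p(\A_3)=1$ when $3\mid p$ and $\C^p(\A_3)=\A_3$ otherwise. For $n=4$ I would use $[\A_4,\A_4]=V_4$, so that $\A_4/V_4\cong\Z_3$ and $\C^p(\A_4)$ is the preimage of $p\cdot\Z_3$: if $3\mid p$ this subgroup is trivial, hence every $p$th power lies in $V_4$ and $\C^p(\A_4)=V_4$; if $3\nmid p$, then the $p$th power of some $3$-cycle is again a $3$-cycle, so $\C^p(\A_4)$ contains a $3$-cycle together with $V_4$, and since $\A_4$ is generated by its $3$-cycles we get $\C^p(\A_4)=\A_4$.

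Assembling the three cases yields the stated formula, the ``otherwise'' clause collecting the subcases $n\in\{3,4\}$ with $3\nmid p$ together with all $n\geq5$. The only point requiring any care is $n=4$: one must verify $[\A_4,\A_4]=V_4$ (the three double transpositions are conjugate in $\A_4$, so $V_4$ is the unique proper nontrivial normal subgroup) and that $3\mid p$ forces every $p$th power into $V_4$, which is immediate since $\A_4/V_4$ has exponent $3$. Everything else follows directly from $\A_n$ being perfect for $n\geq5$ and abelian for $n=3$.
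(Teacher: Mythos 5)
Your proof is correct and follows essentially the same route as the paper: reduce to the abelianization, handle $n\geq5$ via perfectness, $n=3$ via $\A_3\cong\Z_3$, and $n=4$ via $[\A_4,\A_4]=V_4$ together with the behaviour of $3$-cycles under $p$th powers. The explicit observation that $\C^p(G)$ is the preimage of $p\cdot G^{\ab}$ is a nice way to organize the argument, but it is the same underlying computation the paper performs.
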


\begin{proof}
 The case of $n=3$ is obvious since $\A_3 = \Z_3$.
 In the case of $n\geq5$, it is well known that $\A_n^\ab = 0$, and thus $\C^{p}\A_n = \A_n$.
 Since $[\A_4,\A_4]=V_4$, $\C^{p}\A_4$ is isomorphic to $V_4$ or $\A_4$.
 Here, $\C^{p}\A_4$ contains cyclic permutations of order three if and only if $3 \nmid p$.
\end{proof}

%%%%%%%%%%%%%%%%%%%%%%
\section{Taking $\C^p$ repeatedly}\label{sec:RepeatCp}
Let us discuss the derived $p$-series of a knot group.
In \cite{CoHa08b}, for a prime $p$, the derived $p$-series $\{G^{(n)}\}_{n\geq0}$ of a group $G$ is defined by 
\[G^{(0)}:= G,\quad G^{(n+1)}:= \C^p(G^{(n)}).\]
We use this notation when $p$ is not only a prime but also a positive integer.
Here, determining the successive quotients $G^{(n)}/G^{(n+1)}$ and the intersection $G^{(\omega)}:=\bigcap_{n\geq0}G^{(n)}$ is a fundamental problem.
In particular, we are interested in the case where $G$ is a knot group or the braid group $B_n$.
Note that the abelianizations of these groups are isomorphic to $\Z$.

\begin{proposition}\label{prop:SuccessiveQuot}
 Let $G$ be a group with $G^\ab \cong \Z^{\oplus r}$.
 Then the abelian group $G^{(n)}/G^{(n+1)}$ surjects onto $\Z_p^{\oplus r}$.
\end{proposition}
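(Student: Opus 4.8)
The plan is to induct on $n$, showing that at each stage the abelianization of $G^{(n)}$ still surjects onto a free abelian group of rank at least $r$, so that Lemma~\ref{lem:FreeAbCp} applies to give the desired surjection onto $\Z_p^{\oplus r}$. The base case $n=0$ is exactly Lemma~\ref{lem:FreeAbCp} applied to $G$ itself, since $G^\ab \cong \Z^{\oplus r}$ gives $G/\C^p(G) = G^{(0)}/G^{(1)} \cong \Z_p^{\oplus r}$. So the real content is the inductive step: I want to propagate a rank-$r$ free quotient from $G^{(n)}$ to $G^{(n+1)}$.

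For the inductive step, I would argue as follows. Suppose $G^{(n)}$ has a surjection $f\colon G^{(n)} \twoheadrightarrow \Z^{\oplus r}$. Consider the composite $G^{(n)} \xrightarrow{f} \Z^{\oplus r}$. I want to understand the image of $G^{(n+1)} = \C^p(G^{(n)})$ under $f$. Since $f$ is surjective, Lemma~\ref{lem:Sun79}-style functoriality (part (2) of the unnumbered lemma after Lemma~\ref{lem:Rob96}) gives that $f$ restricts to a surjection $\C^p(G^{(n)}) \twoheadrightarrow \C^p(\Z^{\oplus r})$. Now $\C^p(\Z^{\oplus r}) = (p\Z)^{\oplus r} \cong \Z^{\oplus r}$, since in an abelian group the commutators vanish and $\C^p$ is just the subgroup of $p$th powers. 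Hence $G^{(n+1)} = \C^p(G^{(n)})$ surjects onto $\Z^{\oplus r}$, and then Lemma~\ref{lem:FreeAbCp} applied to $G^{(n+1)}$ yields $G^{(n+1)}/\C^p(G^{(n+1)}) = G^{(n+1)}/G^{(n+2)} \twoheadrightarrow \Z_p^{\oplus r}$. This simultaneously completes the induction (carrying the hypothesis "surjects onto $\Z^{\oplus r}$" forward) and proves the statement for index $n+1$.

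Let me restate the cleanest version: I would prove by induction on $n$ the stronger claim that $G^{(n)}$ admits a surjection onto $\Z^{\oplus r}$; the Proposition then follows immediately from Lemma~\ref{lem:FreeAbCp}. The base case is the hypothesis $G^\ab \cong \Z^{\oplus r}$. The inductive step uses only (i) functoriality of $\C^p$ under surjections, and (ii) the computation $\C^p(\Z^{\oplus r}) \cong \Z^{\oplus r}$. Both are routine.

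I do not anticipate a serious obstacle here; the argument is essentially a bookkeeping induction once one notices that $\C^p$ preserves the property of surjecting onto a fixed-rank free abelian group. The only point requiring a moment's care is checking that the surjection $G^{(n)} \twoheadrightarrow \Z^{\oplus r}$ genuinely restricts to a surjection on $\C^p$-subgroups, i.e.\ invoking the correct functoriality statement for surjective homomorphisms rather than for injective ones; this is the "resp.\ surjective" clause of part (2) of the lemma following Lemma~\ref{lem:Rob96}. Everything else is a direct appeal to Lemma~\ref{lem:FreeAbCp}.
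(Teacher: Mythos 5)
Your argument is essentially the paper's proof: the paper simply observes that the abelianization $G \twoheadrightarrow \Z^{\oplus r}$ induces, by iterated functoriality of $\C^p$ under surjections, a surjection $G^{(n)} \twoheadrightarrow (\Z^{\oplus r})^{(n)} \cong \Z^{\oplus r}$ and then passes to quotients to get $G^{(n)}/G^{(n+1)} \twoheadrightarrow (\Z^{\oplus r})^{(n)}/(\Z^{\oplus r})^{(n+1)} \cong \Z_p^{\oplus r}$; your induction unpacks exactly this. One small correction: Lemma~\ref{lem:FreeAbCp} does not literally apply to $G^{(n)}$ for $n\geq1$, since its hypothesis is that the abelianization \emph{is} $\Z^{\oplus r}$, whereas you only know that $G^{(n)}$ \emph{surjects onto} $\Z^{\oplus r}$ (its abelianization could have larger rank or torsion). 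The conclusion you actually need --- a surjection $G^{(n)}/\C^p(G^{(n)}) \twoheadrightarrow \Z_p^{\oplus r}$, not an isomorphism --- follows instead from the same functoriality you already invoke: the surjection $G^{(n)} \twoheadrightarrow \Z^{\oplus r}$ carries $\C^p(G^{(n)})$ onto $(p\Z)^{\oplus r}$ and hence induces a surjection of the corresponding quotients.
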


\begin{proof}
 The abelianization of $G$ induces the homomorphism $G^{(n)} \twoheadrightarrow (\Z^{\oplus r})^{(n)}$ for each $n$.
 Hence, $G^{(n)}/G^{(n+1)} \twoheadrightarrow (\Z^{\oplus r})^{(n)}/(\Z^{\oplus r})^{(n+1)} \cong \Z_p^{\oplus r}$.
\end{proof}

\begin{remark}
 In contrast, the stability of the (usual) derived series of $G(K)$ depends on a knot $K$ (\cite[Corollary~4.8]{Coc04}, \cite[Proposition~2.7]{Rou02}).
\end{remark}

\begin{proposition}\label{prop:Omega}
 Let $p$ be a prime, $G$ be a finitely generated group with $H_1(G)\cong\Z$.
 Then $G^{(\omega)}$ coincides with the commutator subgroup of $G$.
\end{proposition}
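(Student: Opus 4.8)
The plan is to prove the two inclusions $G^{(\omega)}\subseteq[G,G]$ and $[G,G]\subseteq G^{(\omega)}$ separately; the first is formal, and essentially all the content lies in the second.

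For $G^{(\omega)}\subseteq[G,G]$: the abelianization $G\twoheadrightarrow G^{\ab}\cong\Z^{\oplus r}$ induces, by functoriality of $\C^p$, homomorphisms $G^{(n)}\to(\Z^{\oplus r})^{(n)}$ for every $n$. Since $\C^p(A)=pA$ for any abelian group $A$, an induction gives $(\Z^{\oplus r})^{(n)}=p^n\Z^{\oplus r}$, so the image of $G^{(\omega)}=\bigcap_{n\ge0}G^{(n)}$ in $\Z^{\oplus r}$ is contained in $\bigcap_{n\ge0}p^n\Z^{\oplus r}=0$. Hence $G^{(\omega)}\subseteq\Ker(G\twoheadrightarrow G^{\ab})=[G,G]$.

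For $[G,G]\subseteq G^{(\omega)}$ it suffices to show $[G,G]\subseteq G^{(n)}$ for all $n$, and I would prove this by induction on $n$, carrying along the auxiliary assertion that $H_1(G^{(n)})\cong\Z^{\oplus r}$ and $H_2(G^{(n)})=0$. The case $n=0$ is the hypothesis (and $[G,G]\subseteq G^{(0)}=G$ is trivial). Since $\C^p$ of a group has abelianization-image $p$ times the original (Lemma~\ref{lem:ElemLem}), iterating shows that the image of $G^{(n)}$ in $G^{\ab}$ is exactly $p^n\Z^{\oplus r}$; granting $[G,G]\subseteq G^{(n)}$ we then get $G^{(n)}/[G,G]\cong p^n\Z^{\oplus r}$, and if moreover $H_1(G^{(n)})\cong\Z^{\oplus r}$ the natural surjection $H_1(G^{(n)})\twoheadrightarrow G^{(n)}/[G,G]$ is a surjection $\Z^{\oplus r}\twoheadrightarrow\Z^{\oplus r}$, hence an isomorphism, which forces $[G^{(n)},G^{(n)}]=[G,G]$. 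Therefore $[G,G]=[G^{(n)},G^{(n)}]\subseteq\C^p(G^{(n)})=G^{(n+1)}$, which is the inductive step for the first assertion. It remains to propagate the auxiliary assertion: I would apply the Lyndon--Hochschild--Serre spectral sequence (or at least the five-term exact sequence) to the extension $1\to G^{(n+1)}\to G^{(n)}\to Q\to1$ with $Q:=G^{(n)}/G^{(n+1)}\cong(\Z_p)^{\oplus r}$ (Lemma~\ref{lem:FreeAbCp}), using the vanishing $H_2(\Z_p;\Z)=0$ (valid since $p$ is prime), the homology of $(\Z_p)^{\oplus r}$, and the inductive data on $H_\ast(G^{(n)})$, to conclude $H_1(G^{(n+1)})\cong\Z^{\oplus r}$ and $H_2(G^{(n+1)})=0$.

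The main obstacle is exactly this homological persistence step. The five-term sequence only pins down the $Q$-coinvariants $H_1(G^{(n+1)})_Q$ — which, from $H_2(G^{(n)})=0$, comes out as $\Z^{\oplus r}\oplus H_2(Q)$ — and gives no control of $H_1(G^{(n+1)})$ from below; to rule out extra homology in $G^{(n+1)}$ one must exploit the full $\Z[Q]$-module structure on $H_\ast(G^{(n+1)})$ together with $H_2(G^{(n)})=0$ across all the higher differentials. Concretely, in the cases of interest (e.g.\ $G$ a knot group, $r=1$) this reduces to the fact that the Alexander polynomial takes a value coprime to $p$ at every $p$-th root of unity, so that $H_1$ of each successive cyclic cover is infinite cyclic modulo torsion prime to $p$. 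Once the auxiliary assertion holds for all $n$, combining the two inclusions yields $G^{(\omega)}=[G,G]$.
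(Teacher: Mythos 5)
Your first inclusion $G^{(\omega)}\subseteq[G,G]$ is correct (and needs neither primality of $p$ nor the homological hypotheses). The second half, however, has a genuine gap, and moreover the auxiliary assertion you propose to carry through the induction is actually false. Take $G=G(3_1)=\ang{a,b\mid a^2=b^3}$ and $p=2$: then $G^{(1)}=\C^2(G)$ is the fundamental group of the $2$-fold cyclic cover of the trefoil complement, and $H_1(G^{(1)})\cong\Z\oplus\Z_3$ (the torsion is $\Delta(-1)=3$), not $\Z$. Consequently the step you derive from it also fails: the surjection $H_1(G^{(1)})\twoheadrightarrow G^{(1)}/[G,G]\cong\Z$ has kernel $[G,G]/[G^{(1)},G^{(1)}]\cong\Z_3\neq0$, so $[G,G]\neq[G^{(1)},G^{(1)}]$ already at $n=1$. (The desired conclusion $[G,G]\subseteq G^{(2)}$ still holds, but only because the offending torsion is coprime to $p$ and hence lands inside $\C^p(G^{(1)})$ anyway --- which is precisely the phenomenon your integral-homology induction cannot see.) You candidly flag the ``homological persistence step'' as the main obstacle and offer only a heuristic via Alexander polynomials in the case $r=1$; that is exactly the part that does not close as written.

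The repair is to run the whole argument with $\Z_p$ coefficients rather than $\Z$ coefficients, and this is what the paper does in one line. The hypotheses $H_1(G)\cong\Z^{\oplus r}$ (in particular torsion-free) and $H_2(G)=0$ say exactly that the abelianization $\alpha\colon G\twoheadrightarrow\Z^{\oplus r}$ induces an isomorphism on $H_1(-;\Z_p)$ and a surjection on $H_2(-;\Z_p)$ (universal coefficients, using that $\operatorname{Tor}(H_1(G),\Z_p)=0$). The $p$-local analogue of Stallings' theorem, \cite[Corollary~4.3]{CoHa08b}, then states that such a map induces isomorphisms $G/G^{(n)}\to\Z^{\oplus r}/(\Z^{\oplus r})^{(n)}$ for all $n$ and hence $G/G^{(\omega)}\cong\Z^{\oplus r}$, so $G^{(\omega)}=\Ker\alpha=[G,G]$. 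That citation internalizes precisely the mod-$p$ version of your induction, where the persistence of $H_1(-;\Z_p)\cong\Z_p^{\oplus r}$ and the vanishing of $H_2(-;\Z_p)$ can in fact be propagated; if you want a self-contained proof you must carry those $\Z_p$-coefficient statements, not the integral ones, through your five-term/spectral sequence step.
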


\begin{proof}
 Since the abelianization $\alpha\colon G \twoheadrightarrow \Z$ induces an isomorphism on $H_1(-;\Z_p)$ and a surjection on $H_2(-;\Z_p)$, by \cite[Corollary~4.3]{CoHa08b}, $\alpha$ induces an isomorphism $G/G^{(\omega)} \to \Z/\Z^{(\omega)} = \Z$.
 Therefore, $G^{(\omega)}$ coincides with $\Ker\alpha = [G,G]$.
\end{proof}

\begin{remark}
 Even if $p$ is not prime, by considering a prime factor of $p$, one gets $G^{(\omega)} \leq [G,G]$.
 However, the equality $G^{(\omega)} = [G,G]$ does not hold in general.
 Indeed, we consider the case of $6 \mid p$ and $G=G(3_1)$, where $3_1$ is the trefoil.
 It follows from $\SS_3^{(2)} = \A_3^{(1)} = 1$ that the homomorphism $G(3_1) \cong B_3 \twoheadrightarrow \SS_3$ induces $G(3_1)/G(3_1)^{(\omega)} \twoheadrightarrow \SS_3$.
 Since $\SS_3$ is not abelian, $G(3_1)^{(\omega)}$ is strictly smaller than the commutator subgroup of $G(3_1)$.
 
 The same argument shows that if $G(K)$ surjects onto $G(3_1)$, then the equality does not hold as well.
 Such knots are found, for example, in \cite[Theorem~1.2]{HKMS11}.
\end{remark}

\begin{corollary}
 For $p$ and $G$ as in Proposition~\ref{prop:Omega}, $G^{(n)}/G^{(n+1)}$ is isomorphic to $\Z_p$.
\end{corollary}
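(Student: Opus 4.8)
The plan is to combine the two preceding results—Proposition~\ref{prop:SuccessiveQuot} and Proposition~\ref{prop:Omega}—so that the surjection from the former becomes an isomorphism once we know the $G^{(n)}$ stabilize to the commutator subgroup. First I would invoke Proposition~\ref{prop:Omega}: since $p$ is prime and $G$ satisfies $H_1(G)\cong\Z^{\oplus r}$, $H_2(G)=0$, we get $G^{(\omega)}=[G,G]=:G'$. Thus for every $n\geq1$ we have $G^{(n)}\supseteq G^{(\omega)}=G'$, while already $G^{(1)}=\C^p(G)\subseteq G'\cdot\{g^p\}\subseteq$ — more precisely $G^{(1)}=\C^p(G)$ and $G/\C^p(G)$ is abelian by Lemma~\ref{lem:ElemLem}, so $G'\subseteq G^{(1)}$. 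Hence $G^{(n)}=G'$ for all $n\geq1$, and in particular $G^{(n)}/G^{(n+1)}=G'/G'$ is trivial for $n\geq1$. That would be wrong for $n\geq1$ unless I have miscounted — so the statement must really only assert $n=0$, or else the indexing intends $G^{(0)}=G$ giving $G^{(0)}/G^{(1)}=G/\C^p(G)$, which by Lemma~\ref{lem:FreeAbCp} is exactly $\Z_p^{\oplus r}$, and then the chain must not collapse.

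Let me re-read: the corollary claims $G^{(n)}/G^{(n+1)}\cong\Z_p^{\oplus r}$ for \emph{all} $n$, which forces the derived $p$-series \emph{not} to stabilize after one step, contradicting the naive computation above — so the resolution is that $\C^p$ applied to a group whose abelianization is free of rank $r$ again has abelianization \emph{containing} a free summand of rank $r$; that is, $G^{(1)}=\C^p(G)$ itself satisfies the hypotheses of Proposition~\ref{prop:Omega} (or at least of Proposition~\ref{prop:SuccessiveQuot}). So the real plan is: show by induction on $n$ that each $G^{(n)}$ has $H_1\cong\Z^{\oplus r}$ and $H_2=0$, then apply Proposition~\ref{prop:SuccessiveQuot} to get a surjection $G^{(n)}/G^{(n+1)}\twoheadrightarrow\Z_p^{\oplus r}$, and separately use Proposition~\ref{prop:Omega} applied to $G^{(n)}$ to see that $(G^{(n)})^{(\omega)}=[G^{(n)},G^{(n)}]$, which combined with $G^{(n+1)}=\C^p(G^{(n)})\supseteq[G^{(n)},G^{(n)}]$ pins down the quotient exactly. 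The base case $n=0$ is the hypothesis; the inductive step is the crux.

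For the inductive step I would argue as follows. Assume $H_1(G^{(n)})\cong\Z^{\oplus r}$ and $H_2(G^{(n)})=0$. Set $H:=G^{(n)}$, $N:=G^{(n+1)}=\C^p(H)$. By Lemma~\ref{lem:FreeAbCp}, $H/N\cong\Z_p^{\oplus r}$, so in particular the extension $1\to N\to H\to\Z_p^{\oplus r}\to1$ is finite-index; since $p$ is prime this is a tower of index-$p$ extensions, and I can apply Proposition~\ref{prop:Omega}'s input hypothesis transfer. The cleanest route: Proposition~\ref{prop:Omega} gives $H^{(\omega)}=[H,H]$, and since $N=\C^p(H)$ with $H/N$ elementary abelian, the derived $p$-series of $H$ from stage $1$ onward lives inside $[H,H]$ and eventually equals it, so $N^{(\omega)}=H^{(\omega)}=[H,H]$ — wait, that again suggests collapse. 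The honest resolution is that Proposition~\ref{prop:SuccessiveQuot} already does all the work: it shows $G^{(n)}/G^{(n+1)}$ surjects onto $\Z_p^{\oplus r}$ for \emph{every} $n$ with no extra hypotheses beyond $G^\ab\cong\Z^{\oplus r}$, and Proposition~\ref{prop:Omega} shows the series does \emph{not} reach $[G,G]$ at any finite stage (its intersection does), which forces each successive quotient to be nontrivial — but to get it to be \emph{exactly} $\Z_p^{\oplus r}$ rather than something larger, we need an upper bound. The upper bound comes from applying Cochran--Harvey's \cite[Corollary~4.3 or 4.8]{CoHa08b} along the lines of Proposition~\ref{prop:Omega}: the abelianization $\alpha\colon G\to\Z^{\oplus r}$ being a $\Z_p$-homology equivalence in low degrees means it induces isomorphisms $G^{(n)}/G^{(n+1)}\xrightarrow{\sim}(\Z^{\oplus r})^{(n)}/(\Z^{\oplus r})^{(n+1)}\cong\Z_p^{\oplus r}$, where the last identification uses $(\Z^{\oplus r})^{(n)}=p^n\Z^{\oplus r}$ (since $\C^p$ of a free abelian group is $p$ times it, as $[g,h]=1$).

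I would therefore organize the final proof as: (1) observe $(\Z^{\oplus r})^{(n)}=p^n\Z^{\oplus r}$ by iterating $\C^p(\Z^{\oplus r})=p\Z^{\oplus r}$; (2) recall from the proof of Proposition~\ref{prop:Omega} that $\alpha\colon G\to\Z^{\oplus r}$ induces isomorphisms on $H_1(-;\Z_p)$ and epimorphisms on $H_2(-;\Z_p)$; (3) apply the relevant functoriality statement of \cite{CoHa08b} (the same one cited in Proposition~\ref{prop:Omega}, which compares derived $p$-series quotients under such maps) to conclude $\alpha$ induces isomorphisms $G^{(n)}/G^{(n+1)}\to p^n\Z^{\oplus r}/p^{n+1}\Z^{\oplus r}\cong\Z_p^{\oplus r}$. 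The main obstacle is step (3): making sure the cited Cochran--Harvey result really gives an isomorphism on \emph{each} successive quotient (not just on $G/G^{(\omega)}$), which may require either a direct appeal to their Theorem comparing the full associated graded of the derived $p$-series, or a short induction feeding the $\Z_p$-homology equivalence of $\alpha$ down to the restricted maps $G^{(n)}\to p^n\Z^{\oplus r}$; I would first check whether $G^{(n)}\to p^n\Z^{\oplus r}$ is itself a $\Z_p$-homology equivalence in degrees $\leq2$, which would let me apply Proposition~\ref{prop:Omega} (or its proof) verbatim to each stage.
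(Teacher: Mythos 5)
Your proposal never actually closes, and it goes astray early. The detour in your opening paragraph rests on a non sequitur: from $[G,G]\subseteq G^{(1)}$ and $G^{(\omega)}=[G,G]$ you conclude that $G^{(n)}=[G,G]$ for all $n\geq1$, i.e.\ that the series collapses. Containment of $[G,G]$ in every $G^{(n)}$ together with $\bigcap_n G^{(n)}=[G,G]$ does not force any $G^{(n)}$ to equal $[G,G]$ --- consider $G=\Z^{\oplus r}$ itself, where $G^{(n)}=p^n\Z^{\oplus r}$, every term contains $[G,G]=0$, the intersection is $0$, and yet every successive quotient is $\Z_p^{\oplus r}$. There is no paradox to resolve, and much of your write-up is spent escaping a contradiction that was never there. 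The plan you finally settle on (steps (1)--(3)) is then left with what you yourself call ``the main obstacle'': whether the cited Cochran--Harvey machinery controls each finite-stage quotient $G^{(n)}/G^{(n+1)}$ rather than only $G/G^{(\omega)}$. You do not resolve this, so the proof is incomplete as written. (That route is in fact viable, since \cite{CoHa08b} does compare $G/G^{(n)}$ with $\Z^{\oplus r}/p^n\Z^{\oplus r}$ for each $n$, but you would have to verify and invoke that stronger statement explicitly.)

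The irony is that no such input is needed: you already hold all the pieces for the two-line sandwich that the paper uses. Since $G^{(\omega)}\leq G^{(n+1)}$, the group $G/G^{(n+1)}$ is a quotient of $G/G^{(\omega)}\cong\Z^{\oplus r}$ by Proposition~\ref{prop:Omega}, so its subgroup $G^{(n)}/G^{(n+1)}$ is abelian and generated by at most $r$ elements; moreover it is killed by $p$ because $g^p\in\C^p(G^{(n)})=G^{(n+1)}$ for every $g\in G^{(n)}$. As $p$ is prime, this gives $G^{(n)}/G^{(n+1)}\cong\Z_p^{\oplus r'}$ with $r'\leq r$, and the surjection onto $\Z_p^{\oplus r}$ from Proposition~\ref{prop:SuccessiveQuot} forces $r'=r$. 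In short: Proposition~\ref{prop:SuccessiveQuot} gives the lower bound, Proposition~\ref{prop:Omega} gives the upper bound, and no homological comparison theorem beyond what is already proved is required.
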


\begin{proof}
 By Proposition~\ref{prop:Omega}, we have homomorphisms
 \[G^{(n)}/G^{(n+1)} \hookrightarrow G/G^{(n+1)} \twoheadleftarrow G/G^{(\omega)} \cong \Z.\]
 Thus $G^{(n)}/G^{(n+1)}$ is a cyclic group whose order is a divisor of $p$.
 Finally, Proposition~\ref{prop:SuccessiveQuot} completes the proof.
\end{proof}

\def\cprime{$'$} \def\cprime{$'$}

\end{document}